\newtheorem{theorem}{Theorem}[section]
\newtheorem{lemma}[theorem]{Lemma}
\begin{document}
\abovedisplayskip=6pt plus 1pt minus 1pt \belowdisplayskip=6pt
plus 1pt minus 1pt
\thispagestyle{empty} \vspace*{-1.0truecm} \noindent
\vskip 10mm

\begin{center}{\large Integral positive (negative) quandle cocycle invariants are trivial for knots} \end{center}

\vskip 5mm
\begin{center}{Zhiyun Cheng \quad Hongzhu Gao\\
{\small School of Mathematical Sciences, Beijing Normal University
\\Laboratory of Mathematics and Complex Systems, Ministry of
Education, Beijing 100875, China
\\(email: czy@bnu.edu.cn \quad hzgao@bnu.edu.cn)}}\end{center}

\vskip 1mm

\noindent{\small {\small\bf Abstract} In this note we prove that for any finite quandle $X$ and any 2-cocycle $\phi\in Z^2_{Q\pm}(X; \mathds{Z})$, the cocycle invariant $\Phi^{\pm}_{\phi}(K)$ is trivial for all knots $K$.
\ \

\vspace{1mm}\baselineskip 12pt

\noindent{\small\bf Keywords} quandle, 2-cocycle invariant\ \

\noindent{\small\bf MR(2010) Subject Classification} 57M27\ \ {\rm }}

\vskip 1mm

\vspace{1mm}\baselineskip 12pt

\let\thefootnote\relax\footnote{The authors are supported by NSFC 11301028 and the Fundamental Research Funds for Central Universities of China 105-105580GK.}

\section{Introduction}
A quandle \cite{Joy1982,Mat1984} is a set with a binary operation which satisfies some axioms motivated by variations on Reidemeister moves. Similar to the knot group, for every knot $K\subset S^3$ one can define the knot quandle $Q(K)$ \cite{Joy1982}. It is well known that $Q(K)$ is a powerful knot invariant. It characterizes the knot $K$ up to inversion. However in general the knot quandle is not easy to deal with, therefore it is more convenient to count the homomorphisms from $Q(K)$ to a fixed finite quandle, known as the quandle coloring invariant. More sophisticated invariants of this sort were introduced via quandle cohomology in \cite{Car2003}. More precisely, as a modification of the rack cohomology theory \cite{Fen1995,Fen1996}, J. S. Carter et al constructed the quandle cohomology theory, then they showed that with a given quandle 2-cocycle (3-cocycle) one can generalize the quandle coloring invariant to a state-sum invariant for knots (knotted surfaces). In particular when the 2-cocycle is a coboundary this state-sum invariant reduces to the quandle coloring invariant. In \cite{Che2014} we consider another boundary map of the chain complex and introduce the positive quandle cohomology theory. Similar to the quandle cohomology theory, with a given positive quandle 2-cocycle and 3-cocycle we can also define a state-sum invariant for knots and knotted surfaces respectively. Besides of the trivial quandle, it was proved that if we choose the dihedral quandle then the quandle cocycle invariants and positive quandle cocycle invariants are both trivial (i.e. reduce to the quandle coloring invariant) for knots \cite{Car2003,Che2014}. The main result of this note is an extension of this fact.
\begin{theorem}
Let $X$ be a finite quandle and $\phi\in Z^2_{Q-}(X; \mathds{Z})$, then for any knot $K$ the cocycle invariant $\Phi^{-}_{\phi}(K)$ is trivial.
\end{theorem}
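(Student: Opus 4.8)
The plan is to reduce the statement to a purely combinatorial assertion about a single coloring and then to prove that assertion by a telescoping argument along the knot. Since the coefficient group is $\mathds{Z}$, the state-sum $\Phi^{-}_{\phi}(K)=\sum_{\mathcal{C}}t^{B(\mathcal{C})}$ is trivial (that is, it reduces to the quandle coloring invariant, every coloring contributing $t^{0}$) if and only if the total Boltzmann weight $B(\mathcal{C})$ vanishes for every quandle coloring $\mathcal{C}\colon Q(K)\to X$ of every diagram of $K$. I would fix such a diagram $D$ together with a coloring and regard $B(\mathcal{C})$ as the pairing $\langle\phi,z(\mathcal{C})\rangle$ of the negative $2$-cocycle $\phi$ with the fundamental $2$-chain $z(\mathcal{C})$ read off from the colored crossings; by Reidemeister invariance $z(\mathcal{C})$ is a cycle, and because $\phi\in Z^2_{Q-}(X;\mathds{Z})$ the pairing depends only on the class $[z(\mathcal{C})]\in H^{Q-}_2(X;\mathds{Z})$. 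It therefore suffices to show that, for a knot, this class is zero.

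I would next traverse the single component. Orienting $K$ and reading off the arcs $x_1,x_2,\dots,x_n$ in the cyclic order in which they are met, consecutive arcs are separated by an undercrossing; if the over-arc at the $k$-th undercrossing is colored $y_k$, then $x_{k+1}=x_k*y_k$ or $x_{k+1}=x_k\bar{*}y_k$ according to the sign $\varepsilon_k$, where $\bar{*}$ denotes the inverse quandle operation. Writing $B(\mathcal{C})$ as the cyclic sum $\sum_{k=1}^{n}\varepsilon_k\,\phi(x_k,y_k)$, the goal is to produce a \emph{potential} $g=g^{\mathcal{C}}_{\phi}$ on the traversed colors, built by integrating $\phi$ along the traversal, such that each crossing contributes $\varepsilon_k\,\phi(x_k,y_k)=g(x_{k+1})-g(x_k)$. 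The cyclic sum then telescopes to $g(x_{n+1})-g(x_1)=0$, since $x_{n+1}=x_1$ for a knot; equivalently $z(\mathcal{C})$ is exhibited as a boundary, so $[z(\mathcal{C})]=0$.

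The step that makes everything work is verifying the local identity $\varepsilon_k\,\phi(x_k,y_k)=g(x_{k+1})-g(x_k)$ from the negative $2$-cocycle condition together with the quandle axioms, and checking that $g$ is consistent, i.e.\ independent of the accumulated action of the inner automorphisms as one proceeds around the diagram. This is exactly where the relation defining $Z^2_{Q-}(X;\mathds{Z})$, rather than the ordinary quandle $2$-cocycle relation, is indispensable: it is precisely the condition guaranteeing that $\phi$ integrates to a single-valued potential along a closed loop assembled from self-distributive operations. I would set up an induction on $k$ in which $g(x_{k+1})$ is defined from $g(x_k)$ via the cocycle identity, and then use self-distributivity to show the definition is compatible each time a color recurs as an over-arc elsewhere in the traversal.

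I expect the main obstacle to be the global ``closing-up'' around the knot: one must ensure that after traversing all $n$ crossings the potential returns to its initial value, which amounts to showing that the monodromy of the construction around the single component is trivial. This is the point at which the hypothesis that $K$ is a knot (one component) is essential and cannot be relaxed to links, since for several components the per-component potentials need not be mutually compatible and the cyclic telescoping fails. Handling the bookkeeping of crossing signs and of the inverse operation $\bar{*}$ uniformly---so that positive and negative crossings, and over- versus under-passages, all fit the single difference $g(x_{k+1})-g(x_k)$---will require the careful case analysis that I would expect to occupy the bulk of the write-up.
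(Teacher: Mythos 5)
Your opening reduction is fine (it suffices to show each coloring contributes zero total weight, and all colors of a one-component diagram lie in a single orbit, which is also how the paper begins), but the core of your plan fails. As you set it up, the potential $g$ is \emph{defined} by integrating $\phi$ along the traversal, so the telescoping identity is automatic and the entire content is the closing-up condition $g(x_{n+1})=g(x_1)$ --- which is literally the statement to be proved; the construction is circular unless single-valuedness can be derived from something else. Your proposed source for it, ``the negative $2$-cocycle condition together with the quandle axioms,'' cannot supply it: any such derivation would be purely formal and hence valid over \emph{every} coefficient group, whereas the theorem is false over $\mathds{Z}_2$. As the paper remarks (citing \cite{Car2003}), there are finite quandles and $2$-cocycles with $\mathds{Z}_2$ coefficients giving nontrivial invariants of the trefoil and the figure-eight knot. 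Equivalently, the fundamental class $[z(\mathcal{C})]$ of a colored knot diagram can be a \emph{nonzero} torsion element of $H_2^{Q-}(X;\mathds{Z})$: it is not a boundary in general, so no monodromy-vanishing argument of your type can close up. The same counterexamples kill the stronger version of your plan, where consistency ``each time a color recurs'' would make $g$ a well-defined function on colors: that would exhibit $\phi$ restricted to the orbit as a coboundary, i.e.\ it would prove $H^2_{Q-}(X;G)=0$ for every connected quandle and every $G$, which is false. The obstacle you flagged as the ``main obstacle'' is in fact a genuine obstruction, not a bookkeeping issue.

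What is true is strictly weaker, and the gap between the two is exactly where integer coefficients --- which your argument never uses --- become indispensable. The paper shows that for a connected quandle the class of $\phi$ in $H^2_{Q-}(X;\mathds{Z})$ has \emph{finite order}: by Etingof--Gra\~{n}a \cite{Eti2003} the rational rack cohomology in degree $2$ of a connected quandle is one-dimensional, the Litherland--Nelson splitting \cite{Lit2003} and the computation $H_2^{D-}(X)=\mathds{Z}[\mathrm{Orb}(X)]$ then give $H^2_{Q-}(X;\mathds{Q})=0$. So $k\phi=\delta g$ for some $k\geq 1$, and your telescoping mechanism does apply --- but to the coboundary $k\phi$, not to $\phi$ itself. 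This yields $k\cdot\sum_\tau \varepsilon_\tau\phi(x_\tau,y_\tau)=0$, and one concludes $\sum_\tau \varepsilon_\tau\phi(x_\tau,y_\tau)=0$ only because an integer annihilated by $k$ is zero (the paper's Lemma 3.1); over $\mathds{Z}_2$ this last step is unavailable, which is consistent with the counterexamples. To repair your write-up you would have to replace the goal ``$[z(\mathcal{C})]=0$'' by ``$[z(\mathcal{C})]$ pairs trivially with integral classes because it is torsion,'' and to prove torsionness you would need to import the homological input above --- at which point you have reconstructed the paper's proof rather than found an elementary alternative.
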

\begin{theorem}
Let $X$ be a finite quandle and $\phi\in Z^2_{Q+}(X; \mathds{Z})$, then for any knot $K$ the cocycle invariant $\Phi^{+}_{\phi}(K)$ is trivial.
\end{theorem}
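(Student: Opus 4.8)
The plan is to reduce both theorems to a single homological fact about knots and then to exploit that the coefficient group $\mathds{Z}$ is torsion-free. After fixing a diagram $D$ of $K$, the invariant is a state sum $\Phi^{\pm}_{\phi}(K)=\sum_{C}t^{\,W^{\pm}_{\phi}(C)}$ in the group ring $\mathds{Z}[\mathds{Z}]$, the sum ranging over all $X$-colourings $C$ of $D$, where $W^{\pm}_{\phi}(C)$ is the total Boltzmann weight obtained by evaluating $\phi$ on the fundamental $2$-cycle $z_{C}$ assembled from the crossings. Saying that the invariant is trivial means exactly that $W^{\pm}_{\phi}(C)=0$ for every colouring $C$, so that every term contributes $t^{0}=1$ and $\Phi^{\pm}_{\phi}(K)$ collapses to the colouring number $|\mathrm{Col}_{X}(K)|$. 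The first step is therefore to rewrite $W^{\pm}_{\phi}(C)$ as the Kronecker pairing $\langle[\phi],[z_{C}]\rangle$ between $[\phi]\in H^{2}_{Q\pm}(X;\mathds{Z})$ and $[z_{C}]\in H^{Q\pm}_{2}(X;\mathds{Z})$. This uses only that a coboundary pairs to zero with a cycle, which holds verbatim for whichever of the two boundary operators $\partial^{\pm}$ defines the positive or the negative theory; from here on the two cases run in parallel.

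Because $\phi$ has integral coefficients, $\langle[\phi],-\rangle$ is a homomorphism $H^{Q\pm}_{2}(X;\mathds{Z})\to\mathds{Z}$, and since $\mathds{Z}$ is torsion-free it annihilates the torsion subgroup of $H^{Q\pm}_{2}(X;\mathds{Z})$. Consequently it suffices to prove the following claim, which is where the hypothesis that $K$ is a knot (and not a general link) enters decisively: for every finite quandle $X$ and every colouring $C$ of a knot, the fundamental class $[z_{C}]$ is a torsion element of $H^{Q\pm}_{2}(X;\mathds{Z})$, equivalently it vanishes in $H^{Q\pm}_{2}(X;\mathbb{Q})$. I stress that the analogous statement is false for links: already the two-element trivial quandle yields an integral $2$-cocycle whose invariant reads off the linking number of the Hopf link, so the torsion claim cannot hold component-wise and any correct argument must use the connectivity of $K$ essentially.

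To establish the claim I would traverse $K$ once along $D$, recording the under-arc colours $a_{0},a_{1},\dots,a_{n}=a_{0}$, the over-arc colours $b_{i}$ and the crossing signs $\epsilon_{i}$, so that $a_{i+1}=a_{i}\ast^{\epsilon_{i}}b_{i}$ (using $\ast$ at positive and its inverse at negative crossings); the fundamental $2$-cycle $z_{C}$ is built from these data and closes up precisely because the traversal returns to $a_{0}$, i.e. $\partial_{2}z_{C}=(a_{0})-(a_{n})=0$. The composite inner automorphism $g=\beta_{b_{n-1}}^{\epsilon_{n-1}}\circ\cdots\circ\beta_{b_{0}}^{\epsilon_{0}}$, where $\beta_{y}(x)=x\ast y$, therefore fixes $a_{0}$, and since $X$ is finite the group $\mathrm{Inn}(X)$ is finite, so $g$ has finite order $N$. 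Traversing the knot $N$ times replaces $z_{C}$ by $N\,z_{C}$ while rendering the accumulated monodromy trivial ($g^{N}=\mathrm{id}$); this is the chain-level shadow of the topological fact that the colouring factors through $\pi_{1}(S^{3}\setminus K)$ and that $H_{2}(S^{3}\setminus K;\mathds{Z})=0$ for a knot. Concretely I would produce a $3$-chain $w$ with $\partial_{3}w=N\,z_{C}$ by capping off the $N$-fold traversal, the self-distributivity terms packaged inside $\partial_{3}$ being exactly what allows the over-strands to be slid past one another; this exhibits $[z_{C}]$ as $N$-torsion and finishes both theorems at once, the positive case differing only in the signs carried by $\partial^{\pm}$.

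The main obstacle is precisely this explicit construction of the filling $3$-chain $w$: converting the soft assertion ``$g^{N}=\mathrm{id}$, hence trivial monodromy'' into an honest boundary requires a careful telescoping of the self-distributivity relations, and the sign bookkeeping must be carried out separately for $\partial^{+}$ and $\partial^{-}$. A cleaner way to bypass this obstacle is to invoke the Fenn--Rourke--Sanderson rack space \cite{Fen1995,Fen1996}: the coloured diagram determines a class in $H_{2}(BX)$ that is the image of the fundamental class of $K$, and after passing to quandle (rather than rack) homology the framing is killed by the relation $\phi(x,x)=0$, so this class is governed by $H_{2}$ of the knot complement. Either route reduces the entire statement to the single input $H_{2}(S^{3}\setminus K;\mathbb{Q})=0$, which is exactly the homological feature separating knots from links and which makes the integral invariants $\Phi^{\pm}_{\phi}(K)$ trivial.
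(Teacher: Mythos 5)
You have correctly reformulated the theorem: since an integral pairing kills torsion, it would suffice to show that the fundamental class of every colouring of a knot is torsion in $H_{2}^{Q+}(X;\mathds{Z})$. But that claim is exactly what you never prove, and both routes you sketch toward it fail for reasons specific to the positive theory. Your guiding assumption that ``the two cases run in parallel'' breaks at the very first step: the positive invariant weights a crossing by the checkerboard sign $\epsilon(\tau)$, not the writhe, and the positive boundary is $\partial^{+}(x,y)=(x)+(x\ast y)-2(y)$, so the chain $z_{C}=\sum_{\tau}\epsilon(\tau)(x_{\tau},y_{\tau})$ does \emph{not} close up by the telescoping $\partial_{2}z_{C}=(a_{0})-(a_{n})$ you describe --- that computation belongs to the negative theory. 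Closedness in the positive theory requires the fact that checkerboard signs alternate along each over-arc, which is precisely the cancellation the paper carries out inside Lemma 4.1. Your ``cleaner'' route through the Fenn--Rourke--Sanderson space is also unavailable: the cellular chain complex of the rack space is the standard complex with differential $d_{1}-d_{2}$, i.e. it computes the \emph{negative} (rack) homology, and says nothing about $H_{2}^{Q+}$. So the filling $3$-chain $w$ with $\partial w=Nz_{C}$, which you yourself flag as ``the main obstacle,'' is the entire content of the theorem in your formulation, and it is missing; to supply it homologically you would in effect need a positive-theory analogue of Etingof--Gra\~{n}a's computation (say $H_{2}^{Q+}(\text{Orb}(a);\mathds{Q})=0$ for finite connected quandles), which you neither prove nor can cite.

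The conceptual frame is also off. Knot-ness, i.e. $H_{2}(S^{3}\setminus K;\mathds{Q})=0$, is not the decisive input for this theorem: the paper's closing remark observes that Theorem 1.2 in fact persists for \emph{links} provided the coefficient group has no $2$-torsion, and the Hopf-link/linking-number counterexample you invoke is a negative-theory phenomenon. Even in the negative theory, the fundamental class of a coloured knot is frequently a \emph{nonzero torsion} class (this is exactly why $\mathds{Z}_2$-coefficient invariants of the trefoil are nontrivial), so no argument using only the homology of the knot complement can make it vanish; what forces rational vanishing is the finiteness of the quandle, via Etingof--Gra\~{n}a. The paper's actual proof of Theorem 1.2 is different in kind and avoids homology computations entirely: Lemma 4.1 shows, using the positive cocycle condition specialized at $z=a$ together with the over-arc sign alternation, that the contributions of a colouring $\rho$ and of the translated colouring $\rho\ast a$ sum to zero; Lemma 4.2 shows, by sliding a small $a$-coloured unknot across the diagram by Reidemeister moves, that those two contributions are equal. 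Hence each contribution is $2$-torsion in $\mathds{Z}$, i.e. zero. That two-lemma symmetry argument --- where the work is done by the absence of $2$-torsion in $\mathds{Z}$ rather than by any property of the knot complement --- is what your proposal is missing.
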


We remark that if we replace the coefficient group $\mathds{Z}$ with some other coefficient groups, for example $\mathds{Z}_2$, then the positive(negative) quandle cocycle invariants need not to be trivial. The readers are referred to \cite{Car2003} for some nontrivial cocycle invariants of trefoil and the figure eight knot. On the other hand, if we replace the knot $K$ with a link which has at least two components, then even using  the trivial quandle with two elements one can prove that the negative quandle cocycle invariants contain the information of linking numbers. For the positive quandle cocycle invariant, one can even use it to distinguish the Borromean ring from the 3-component trivial link. See \cite{Car2003} and \cite{Che2014} for more details.

The remainder of this note is arranged as follows. Section 2 contains the basic definitions of quandle, positive(negative) quandle (co)homology groups and the cocycle invariants. Section 3 and section 4 are devoted to the proof of Theorem 1.1 and Theorem 1.2 respectively.

\section{Quandle homology and cocycle invariants}
A \emph{quandle} $(X, \ast)$, is a set $X$ with a binary operation $(a, b)\rightarrow a\ast b$ satisfying the following axioms:
\begin{enumerate}
  \item For any $a\in X$, $a\ast a=a$.
  \item For any $b, c\in X$, there exists a unique $a\in X$ such that $a\ast b=c$.
  \item For any $a, b, c\in X$, $(a\ast b)\ast c=(a\ast c)\ast(b\ast c)$.
\end{enumerate}
Note that with a given binary operation $\ast$ we can define the dual operation $\ast^{-1}$ by
\begin{center}
$c\ast^{-1}b=a$ if $a\ast b=c$.
\end{center}
It is easy to observe that $(X, \ast^{-1})$ is also a quandle, called the \emph{dual quandle} of $(X, \ast)$. For convenience we simply denote $(X, \ast)$ by $X$. If $X$ satisfies the second and the third axioms, then we name it a \emph{rack}. Next we list some  examples of quandle:
\begin{itemize}
  \item Let $T_n=\{a_1, \cdots, a_n\}$ and $a_i\ast a_j=a_i$, we say $T_n$ is the trivial quandle with $n$ elements;
  \item Let $D_n=\{0, 1, \cdots, n-1\}$ and $i\ast j=2j-i$ (mod $n$), we say $D_n$ is the dihedral quandle of order $n$;
  \item Let $X$ be a conjugacy class of a group $G$ and $a\ast b=b^{-1}ab$, we say $X$ is a conjugation quandle.
\end{itemize}

Let $X$ be a quandle and $a, b$ two elements of $X$. We say $a$ and $b$ are of the same \emph{orbit} if $b$ can be obtained from $a$ by some right translations, i.e. there exist some elements $\{a_1, \cdots, a_n\}\subset X$ such that
\begin{center}
$b=(\cdots ((a\ast^{\varepsilon_1} a_1)\ast^{\varepsilon_2}a_2)\cdots )\ast^{\varepsilon_n} a_n$,
\end{center}
where $\varepsilon_i\in \{\pm1\}$. The orbit set of $X$ is denoted by Orb$(X)$, and the orbit that contains $a$ is denoted by Orb$(a)$. Obviously Orb$(a)$ is a subquandle of $X$. If $X$ has only one orbit then we say $X$ is \emph{connected}.

Note that the axioms for a quandle correspond to the three Reidemeister moves respectively. Hence by assigning each arc of a knot diagram with an element of $X$, such that the condition in Figure 1 is satisfied, then the number of colorings is a knot invariant, known as the quandle coloring invariant. Here by an arc we mean a part of the diagram from an undercrossing to the next undercrossing. In particular when $X=D_n$ this is exactly the well known Fox $n$-coloring invariant \cite{Fox1961}. For a fixed finite quandle $X$, we use Col$_{X}(K)$ to denote the quandle coloring invariant of $K$.
\begin{center}
\includegraphics{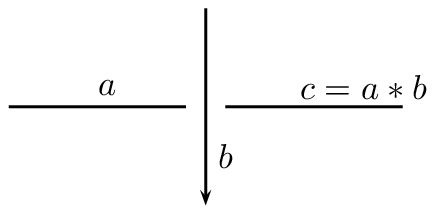}
\centerline{\small Figure 1: The coloring rule\quad}
\end{center}

In order to generalize the quandle coloring invariant, for a fixed coloring one can associate each crossing point with a (Boltzmann) weight such that the signed product of these weights are independent of the choice of the knot diagram. For this purpose we need to introduce the quandle (co)homology theory.

Let $X$ be a finite quandle and $C_n^R(X)$ the free abelian group generated by $n-$tuples $(a_1, \cdots, a_n)$, here each $a_i$ is an element of $X$. Let us consider the following two homomorphisms from $C_n^R(X)$ to $C_{n-1}^R(X)$:
\begin{flalign*}
&d_1(a_1, \cdots, a_n)=\sum\limits_{i=1}^n(-1)^i(a_1, \cdots, a_{i-1}, a_{i+1}, \cdots, a_n) \quad (n\geq2)\\
&d_2(a_1, \cdots, a_n)=\sum\limits_{i=1}^n(-1)^i(a_1\ast a_i, \cdots, a_{i-1}\ast a_i, a_{i+1}, \cdots, a_n) \quad (n\geq2)\\
&d_i(a_1, \cdots, a_n)=0 \quad (n\leq2, i=1, 2)
\end{flalign*}
It is routine to check that $d_1^2=d_2^2=d_1d_2+d_2d_1=0$, it follows that we have two chain complexes $\{C_n^R(X), \partial^+=d_1+d_2\}$ and $\{C_n^R(X), \partial^-=d_1-d_2\}$. Define $C_n^D(X)$ $(n\geq2)$ to be the free abelian group generated by $n$-tuples $(a_1, \cdots, a_n)$ with $a_i=a_{i+1}$ for some $1\leq i\leq n-1$. When $n\leq1$ we define $C_n^D(X)=0$. It is not difficult to check that $\{C_n^D(X), d_i\}$ is a sub-complex of $\{C_n^R(X), d_i\}$ $(i=1, 2)$, from this we conclude that  $\{C_n^D(X), \partial^{\pm}\}$ is a sub-complex of $\{C_n^R(X), \partial^{\pm}\}$. Let $C_n^Q(X)=C_n^R(X)/C_n^D(X)$ and $G$ an abelian group, then we can define the chain complexes and cochain complexes $(W\in\{R, D, Q\})$
\begin{itemize}
  \item $C_*^{W\pm}(X; G)=\{C_*^W(X), \partial^{\pm}\}\bigotimes G$, \quad $\partial^\pm=\partial^\pm\bigotimes$ id;
  \item $C_{W\pm}^{\ast}(X; G)=$Hom$(\{C_*^W(X), \partial^{\pm}\}, G)$, \quad $\delta_\pm=$Hom$(\partial^\pm$, id).
\end{itemize}
Now we define the \emph{positive quandle (co)homology groups} and \emph{negative quandle (co)homology groups} of $X$ with coefficient $G$ to be the (co)homology groups of $C_*^{Q\pm}(X; G)$ $(C_{Q\pm}^*(X; G))$. The positive(negative) rack (co)homology groups and positive(negative) degeneration (co)homology groups can be defined in an analogous way. We remark that the negative quandle (co)homology groups are exactly the well studied quandle (co)homology groups defined by J. S. Carter et al in \cite{Car2003}. In order to distinguish it from the positive quandle (co)homology groups we will call it negative quandle (co)homology groups throughout this note.

Now we turn to the definition of cocycle invariants. Let $K$ be a knot diagram and $X$ a fixed finite quandle, we use $\rho$ to denote a coloring of $K$ by $X$. It is well known that the regions of $R^2-K$ have a checkerboard fashion coloring. Without loss of generality we assume the unbounded region has the white color. Then we can assign each crossing $\tau$ of $K$ with a sign $\epsilon(\tau)$ according to Figure 2.
\begin{center}
\includegraphics{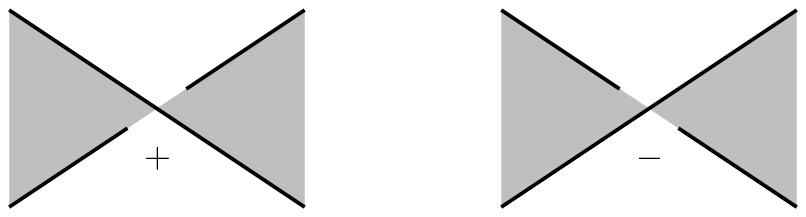}
\centerline{\small Figure 2: The signs of crossings\quad}
\end{center}
Assume the over-arc and under-arcs of $\tau$ are colored by $b$ and $a, a\ast b$ respectively, see Figure 1. Then with a given negative quandle 2-cocycle $\phi$ we can define a weight $W_{\phi}(\tau, \rho)$ associate to $\tau$ as
\begin{center}
$W_{\phi}^-(\tau, \rho)=\phi(a, b)^{w(\tau)}$,
\end{center}
here $w(\tau)$ denotes the writhe of the crossing. The \emph{negative quandle 2-cocycle invariant} of $K$ is defined to be
\begin{center}
$\Phi_{\phi}^-(K)=\sum\limits_{\rho}\prod\limits_{\tau}W_{\phi}^-(\tau, \rho)\in \mathds{Z}G$,
\end{center}
where $\rho$ runs all colorings of $K$ by $X$ and $\tau$ runs all crossing points of the diagram. If $\phi$ is a positive quandle 2-cocycle, we need to make a little modification on the definition of $W_{\phi}^-(\tau, \rho)$. By replacing $w(\tau)$ with $\epsilon(\tau)$, we define
\begin{center}
$W_{\phi}^+(\tau, \rho)=\phi(a, b)^{\epsilon(\tau)}$,
\end{center}
and the \emph{positive quandle 2-cocycle invariant} of $K$ can be defined by
\begin{center}
$\Phi_{\phi}^+(K)=\sum\limits_{\rho}\prod\limits_{\tau}W_{\phi}^+(\tau, \rho)\in \mathds{Z}G$,
\end{center}
similarly here $\rho$ runs all colorings of $K$ by $X$ and $\tau$ runs all crossing points of $K$.
\begin{theorem}{\emph{\cite{Car2003}}}
The negative quandle 2-cocycle invariant $\Phi_{\phi}^-(K)$ is invariant under Reidemeister moves. Moreover if a pair of negative quandle 2-cocycles $\phi_1$ and $\phi_2$ are cohomologous, then $\Phi_{\phi_1}^-(K)=\Phi_{\phi_2}^-(K)$.
\end{theorem}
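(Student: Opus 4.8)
The plan is to establish the two assertions in turn: first that $\Phi_\phi^-$ is unchanged by each Reidemeister move, and then that a coboundary contributes trivially, so that cohomologous cocycles yield the same state sum. Before touching diagrams I would unwind the cocycle condition $\delta_-\phi=0$. Dualizing $\partial^-=d_1-d_2$ in degree $3$ on the quandle complex gives
\[
\partial^-(a_1,a_2,a_3)=(a_1,a_3)-(a_1,a_2)-(a_1\ast a_2,a_3)+(a_1\ast a_3,a_2\ast a_3),
\]
so that $\phi\in Z^2_{Q-}(X;G)$ is equivalent to the identity
\[
\phi(a,b)+\phi(a\ast b,c)=\phi(a,c)+\phi(a\ast c,b\ast c)
\]
for all $a,b,c\in X$, together with the degeneracy relation $\phi(a,a)=0$ coming from the passage to $C^Q$. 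These are the only two facts about $\phi$ that the argument will use.

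For each move I would first record that the quandle axioms provide a bijection between colorings of the two diagrams that agree outside the region of the move: axiom (1) for R1, axiom (2) (existence and uniqueness of $\ast^{-1}$) for R2, and axiom (3) for R3. Hence the sums $\sum_\rho$ range over matched index sets and it suffices to compare the local products of weights. R1 follows at once: the single new crossing is colored $(a,a)$, so its weight is $\phi(a,a)^{w}=0$, the identity of $G$; in particular the change of writhe under R1 is harmless. For R2 the two crossings share the same over-color $b$ and the same pair of under-colors $\{a,a\ast b\}$, while the two crossings of an R2 move always carry opposite writhes; their weights are therefore $\phi(a,b)$ and $\phi(a,b)^{-1}$, which cancel.

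The substantive step is R3, where the cocycle identity is exactly what is needed. Labeling the strands so that the incoming colors are $a$ (lowest), $b$ (middle), $c$ (top), the colorings on the two sides agree precisely by axiom (3), $(a\ast b)\ast c=(a\ast c)\ast(b\ast c)$. Reading the weights crossing-by-crossing, one side yields $\phi(a,b)\,\phi(a\ast b,c)\,\phi(b,c)$ and the other $\phi(a,c)\,\phi(a\ast c,b\ast c)\,\phi(b,c)$; after canceling the common factor $\phi(b,c)$ the desired equality is exactly the cocycle condition above. Since the writhe of a crossing is a local feature of the two strand orientations and is unaffected by the move, corresponding crossings carry equal writhes on the two sides, so the comparison persists with exponents inserted; and since every oriented R3 move is generated by one chosen R3 move together with R2 moves, it suffices to treat this single case. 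I expect the orientation and writhe bookkeeping in matching the two sides to be the only real obstacle.

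Finally, for the cohomology statement write $\phi_2=\phi_1+\delta_-\psi$ with $\psi$ a $1$-cochain. Dualizing $\partial^-$ in degree $2$ gives $(\delta_-\psi)(a,b)=\psi(a)-\psi(a\ast b)$, so the extra factor attached to each coloring is $\prod_\tau[\psi(a)-\psi(a\ast b)]^{w(\tau)}$. The key observation is that raising to the writhe converts the exponent $w(\tau)\,[\psi(a)-\psi(a\ast b)]$ into $\psi(\text{incoming arc})-\psi(\text{outgoing arc})$ at every under-crossing: at a positive crossing $a$ is the incoming arc, at a negative crossing $a$ is the outgoing arc, and the writhe exponent flips the sign accordingly. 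Summing as one travels around the knot, each arc occurs once as an outgoing and once as an incoming arc, so the total telescopes to $0$; hence $\delta_-\psi$ adds the identity of $G$ to every coloring and $\Phi_{\phi_1}^-(K)=\Phi_{\phi_2}^-(K)$.
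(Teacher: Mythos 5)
Your proposal is correct, and it is essentially the standard argument: the paper itself does not prove this theorem but quotes it from \cite{Car2003}, and your reconstruction matches that source's proof point for point — coloring bijections from the quandle axioms for each move, the degeneracy relation $\phi(a,a)=0$ disposing of R1, the opposite signs of the two R2 crossings giving cancellation, the 2-cocycle identity $\phi(a,b)+\phi(a\ast b,c)=\phi(a,c)+\phi(a\ast c,b\ast c)$ handling R3, and the telescoping sum $\sum_\tau\bigl(\psi(\text{incoming arc})-\psi(\text{outgoing arc})\bigr)=0$ showing coboundaries contribute trivially. Nothing further is needed.
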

\begin{theorem}{\emph{\cite{Che2014}}}
The positive quandle 2-cocycle invariant $\Phi_{\phi}^+(K)$ is invariant under Reidemeister moves. Moreover if a pair of positive quandle 2-cocycles $\phi_1$ and $\phi_2$ are cohomologous, then $\Phi_{\phi_1}^+(K)=\Phi_{\phi_2}^+(K)$.
\end{theorem}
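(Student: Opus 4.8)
\emph{The plan is to follow the classical template used by Carter et al. for Theorem 2.3, adapting every local computation to the checkerboard sign $\epsilon(\tau)$ and to the boundary $\partial^+=d_1+d_2$ in place of the writhe $w(\tau)$ and $\partial^-=d_1-d_2$.} Because a Reidemeister move changes a diagram only inside a small disk $D$, and because the $X$-colorings of the two diagrams agree outside $D$ and are in canonical bijection (the colors of the strands inside $D$ are forced by those entering through $\partial D$), it suffices to prove for each move that the product $\prod_\tau W_\phi^+(\tau,\rho)$ over the crossings inside $D$ is unchanged. I would dispatch the three moves in turn and then deduce the cohomology-class statement.

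For R1 the kink is a self-crossing of a single strand, so in the notation of Figure 1 its two under-arcs carry the same color and its weight is $\phi(a,a)^{\epsilon(\tau)}$. Since $\phi$ is a \emph{quandle} cochain it annihilates the degenerate subcomplex $C^D_2(X)$, whence $\phi(a,a)$ is the identity of $G$ and the kink contributes trivially for either sign; the newly created region and its color are therefore irrelevant. For R2 the two new crossings share the same over-arc (color $b$) and the same incoming under-color $a$, so both carry the Figure-1 label $(a,b)$; the bigon trapped between the two strands forces the two crossings into mirror checkerboard configurations, so their signs $\epsilon$ are opposite and the weights $\phi(a,b)^{+1}$ and $\phi(a,b)^{-1}$ cancel. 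One verifies the two orientation subcases and the two choices of over-strand identically.

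The move R3 is the heart of the argument. Coloring the three incoming bottom arcs by $a,b,c$ determines all nine arc colors, and I would write out the product of the three weights on each side of the move, each raised to its checkerboard sign. The decisive point, and the place where the positive theory departs from the negative one, is that the crossing between the two upper strands (colored $b$ and $c$) has its checkerboard sign reversed by the move: as the third strand is swept past that crossing, the black and white regions meeting there are interchanged, so this crossing enters the two sides with opposite signs and contributes a term $-2\phi(b,c)$, while the remaining crossings contribute $\phi(a,c)$, $-\phi(a,b)$, $\phi(a\ast b,c)$ and $-\phi(a\ast c,b\ast c)$. The difference of the two weight products is thus exactly
\[
\phi\bigl(\partial^+(a,b,c)\bigr)=-2\phi(b,c)+\phi(a,c)-\phi(a,b)+\phi(a\ast b,c)-\phi(a\ast c,b\ast c),
\]
which vanishes precisely because $\delta_+\phi=0$. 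This also explains why the defining relation of $Z^2_{Q+}(X;\mathds{Z})$ carries the coefficient $2$.

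For the last assertion I would package the three move-checks into the statement that, for every coloring $\rho$, the $2$-chain $c^+(\rho)=\sum_\tau \epsilon(\tau)\,(a_\tau,b_\tau)\in C_2^{Q+}(X;\mathds{Z})$ is a cycle, i.e. $\partial^+c^+(\rho)=\sum_\tau \epsilon(\tau)\bigl[(a_\tau)+(a_\tau\ast b_\tau)-2(b_\tau)\bigr]=0$; since every arc of the closed diagram begins and ends at an undercrossing, this is the same local bookkeeping, with the checkerboard signs arranged to force the cancellation, that underlies the three move checks. Granting it, if $\phi_1-\phi_2=\delta_+\psi$ with $\psi\in C^1_{Q+}(X;G)$, then for each $\rho$ the difference of the exponents is $\langle\delta_+\psi,\,c^+(\rho)\rangle=\langle\psi,\,\partial^+c^+(\rho)\rangle=0$, so the two weight products agree and $\Phi^+_{\phi_1}(K)=\Phi^+_{\phi_2}(K)$. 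I expect the genuine obstacle to be the R3 verification: one must fix the sign convention of Figure 2 and track the checkerboard coloring \emph{globally} through the move to confirm the sign reversal at the upper crossing, because, unlike the writhe, the checkerboard sign is not determined by the local orientations alone.
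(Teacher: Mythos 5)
First, a point of reference: the paper you were asked about does not actually prove this theorem --- it is quoted from \cite{Che2014}, and the closest material in the present paper is the computation inside the proof of Lemma 4.1. So your proposal stands on its own, and its architecture (canonical bijection of colorings across a move, R1 killed by the quandle/degeneracy condition, R2 by cancellation of a repeated label with opposite signs, R3 producing the cocycle condition, and the cycle-pairing argument for cohomologous cocycles) is the standard and correct plan.

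The genuine gap is in the R3 sign analysis, which you yourself single out as the heart of the proof. You assert that the move reverses the checkerboard sign only at the $(b,c)$-crossing, the signs at the other two crossings being preserved, and that the difference of the two weight products is then $\phi\bigl(\partial^+(a,b,c)\bigr)$. These two claims are incompatible. Write $\epsilon_1,\epsilon_2,\epsilon_3$ for the signs of the crossings labelled $(a,b)$, $(a\ast b,c)$, $(b,c)$ before the move, and $\epsilon_1',\epsilon_2',\epsilon_3'$ for those labelled $(a\ast c,b\ast c)$, $(a,c)$, $(b,c)$ after. If $\epsilon_1'=\epsilon_1$, $\epsilon_2'=\epsilon_2$ and $\epsilon_3'=-\epsilon_3$, then (written additively, the coefficients being in $\mathds{Z}$) the difference of the two weight products is
\[
\epsilon_1\bigl(\phi(a,b)-\phi(a\ast c,b\ast c)\bigr)+\epsilon_2\bigl(\phi(a\ast b,c)-\phi(a,c)\bigr)+2\epsilon_3\phi(b,c),
\]
in which $\phi(a,b)$ and $\phi(a\ast c,b\ast c)$ enter with \emph{opposite} signs; but in $\phi\bigl(\partial^+(a,b,c)\bigr)$ they enter with the \emph{same} sign (both $-$), as do $\phi(a,c)$ and $\phi(a\ast b,c)$ (both $+$). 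No choice of the convention in Figure 2 can reconcile this, so the R3 check, carried out as you describe it, fails. What actually happens is that the moving strand sweeps across the central triangle and thereby swaps the checkerboard shading of the four local regions at \emph{all three} crossings of the disk, so all three signs reverse; moreover the signs before the move satisfy $\epsilon_1=\epsilon_3=-\epsilon_2$ (up to a global sign depending on the ambient shading). With that pattern, ``before minus after'' equals
\[
\epsilon_1\bigl(\phi(a,b)+\phi(a\ast c,b\ast c)\bigr)+\epsilon_2\bigl(\phi(a\ast b,c)+\phi(a,c)\bigr)+2\epsilon_3\phi(b,c)=\pm\,\phi\bigl(\partial^+(a,b,c)\bigr),
\]
which is exactly what the cocycle condition kills. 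So your displayed formula is the right one, but the sign bookkeeping you offer in its support cannot produce it. A lesser gap of the same nature: the cycle property $\partial^+c^+(\rho)=0$, which carries your cohomologous-cocycles step, is not free ``local bookkeeping'': because of the coefficient $-2(b_\tau)$, it requires precisely the combinatorial fact (established in this paper's proof of Lemma 4.1, for a different purpose) that the checkerboard signs of the crossings an arc passes over alternate, compatibly with the signs at its two terminal undercrossings. That argument must actually be made; it is the place where the checkerboard sign, unlike the writhe, genuinely enters.
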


We remark that with a positive(negative) quandle 3-cocycle one can also define a state-sum invariant for knotted surfaces \cite{Car2003,Che2014}, but we will not discuss it in this note. On the other hand, in a recent paper \cite{Kam2014} of S. Kamada, V. Lebed and K. Tanaka, by investigating the relation between the Alexander numbering and $\epsilon(\tau)$ they studied the relation between positive quandle 2-cocycle invariants and the twisted quandle cocycle invariants, which was proposed by J. S. Carter, M. Elhamdadi and M. Saito in \cite{Car2002}. See \cite{Kam2014} for more details.

\section{The proof of Theorem 1.1}
From now on the coefficient group $G$ is supposed to be $\mathds{Z}$, and as usual we will write $H_{Q\pm}^*(X, \mathds{Z})$ simply by $H_{Q\pm}^*(X)$. We begin the proof with a simple lemma.
\begin{lemma}
If $\phi$ has finite order in $H_{Q-}^2(X)$, then $\Phi_{\phi}^-(K)$ is trivial for any knot $K$.
\end{lemma}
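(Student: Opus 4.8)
The plan is to leverage two facts already available: the coefficient group $G=\mathds{Z}$ is torsion-free, and by Theorem 2.3 the invariant $\Phi_\phi^-(K)$ depends only on the cohomology class of $\phi$. Since $\phi$ has finite order in $H_{Q-}^2(X)$, there is an integer $m\geq 1$ with $m[\phi]=0$, i.e. $m\phi=\delta_-\psi$ is a coboundary, hence cohomologous to the zero cocycle. My strategy is to compare the state sum built from $\phi$ with the one built from $m\phi$: the latter is forced to be the trivial (coloring) value, and torsion-freeness will let me pull this conclusion back to $\phi$ itself.

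First I would record how the state sum scales with the cocycle. Fix a coloring $\rho$ of $K$ by $X$, and write $x_\rho=\sum_\tau w(\tau)\,\phi(a,b)\in\mathds{Z}$ for its total exponent. Viewing $G=\mathds{Z}$ multiplicatively as $\langle t\rangle$, the $\rho$-contribution to $\Phi_\phi^-(K)$ is the monomial $t^{x_\rho}$, so $\Phi_\phi^-(K)=\sum_\rho t^{x_\rho}$. Because $(m\phi)(a,b)=m\,\phi(a,b)$, the corresponding contribution for $m\phi$ is $t^{m x_\rho}$, whence $\Phi_{m\phi}^-(K)=\sum_\rho t^{m x_\rho}=\eta_m\bigl(\Phi_\phi^-(K)\bigr)$, where $\eta_m$ is the ring endomorphism of $\mathds{Z}G$ induced by the $m$-th power map $g\mapsto g^m$ on $G$ (on $\mathds{Z}[t^{\pm1}]$ this is simply $t\mapsto t^m$).

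Next I would invoke Theorem 2.3. Since $m\phi$ is cohomologous to the zero cocycle, $\Phi_{m\phi}^-(K)=\Phi_0^-(K)=\sum_\rho t^0=\mathrm{Col}_X(K)\cdot 1_G$, which is precisely the trivial value. Combining with the previous step gives $\eta_m\bigl(\Phi_\phi^-(K)\bigr)=\mathrm{Col}_X(K)\cdot 1_G$. To finish, I would use that $\eta_m$, being given by $t\mapsto t^m$, is injective and fixes $\mathrm{Col}_X(K)\cdot 1_G$; hence $\eta_m\bigl(\Phi_\phi^-(K)\bigr)=\eta_m\bigl(\mathrm{Col}_X(K)\cdot 1_G\bigr)$ forces $\Phi_\phi^-(K)=\mathrm{Col}_X(K)\cdot 1_G$. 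Equivalently and more concretely, comparing coefficients in $\mathds{Z}[t^{\pm1}]$, the number of colorings with $m x_\rho=0$ equals the total number of colorings $\mathrm{Col}_X(K)$, and $m x_\rho=0$ with $m\geq 1$ in the torsion-free group $\mathds{Z}$ forces $x_\rho=0$ for every $\rho$, so each coloring contributes $t^0$.

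I expect the only genuinely delicate point—deserving an explicit word—to be this last inference: the step $m x_\rho=0\Rightarrow x_\rho=0$ uses that $\mathds{Z}$ has no torsion, which is exactly why the result fails for coefficients such as $\mathds{Z}_2$ (cf. the remark following Theorem 1.2). Everything else is bookkeeping once the scaling identity $\Phi_{m\phi}^-(K)=\eta_m\bigl(\Phi_\phi^-(K)\bigr)$ and the invariance under cohomologous cocycles are in hand.
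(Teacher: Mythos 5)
Your proposal is correct and takes essentially the same route as the paper's own proof: apply invariance under cohomologous cocycles (the paper's Theorem 2.1 --- your citation ``Theorem 2.3'' does not exist) to the coboundary $m\phi$, observe that the state-sum exponents scale by $m$, and use torsion-freeness of $\mathds{Z}$ to conclude $x_\rho=0$ for every coloring. The paper's version is terser --- it asserts per-coloring triviality of $\Phi_{m\phi}^-$ directly --- whereas your coefficient-comparison (equivalently, injectivity of $\eta_m$) makes that inference explicit, which is a welcome clarification rather than a different argument.
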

\begin{proof}
Assume $k\phi=0\in H_{Q-}^2(X)$, then by Theorem 2.1 we have $\prod\limits_\tau W_{k\phi}^-(\tau, \rho)=0$ for each coloring $\rho$. In other words, $\prod\limits_\tau k\phi(a, b)^{w(\tau)}=k(\prod\limits_\tau\phi(a, b)^{w(\tau)})=0$. Since we are working with coefficient $\mathds{Z}$, it implies $\prod\limits_\tau\phi(a, b)^{w(\tau)}=0$.
\end{proof}

Lemma 3.1 tells us that it suffices to consider the free part of $H_{Q-}^2(X)$. The following key lemma mainly follows from P. Etingof and M. Gra\~{n}a's result \cite{Eti2003}.
\begin{lemma}
If $X$ is a connected quandle, then $\Phi_{\phi}^-(K)$ is trivial for all knots $K$.
\end{lemma}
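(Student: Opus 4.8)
The plan is to translate the triviality of $\Phi_\phi^-(K)$ into a vanishing statement for the free part of the second negative quandle homology, and then to feed in the homological computation of Etingof and Gra\~{n}a \cite{Eti2003}. First I would reinterpret the state sum homologically: for a fixed coloring $\rho$ the pairs of colors read off at the crossings of $K$, weighted by the writhe, assemble into a $2$-chain $c_\rho=\sum_\tau w(\tau)\,(a_\tau,b_\tau)\in C_2^{Q-}(X)$, and the content of Theorem 2.1 is precisely that $c_\rho$ is a cycle whose class $[c_\rho]\in H_2^{Q-}(X)$ depends only on $\rho$. Under this dictionary the Boltzmann weight is the Kronecker pairing $\prod_\tau W_\phi^-(\tau,\rho)=\langle[\phi],[c_\rho]\rangle$. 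Since this pairing annihilates the torsion subgroup of $H_2^{Q-}(X)$, and since Lemma 3.1 already lets us ignore the torsion of $[\phi]$, it suffices to prove that the free part of $H_2^{Q-}(X)$ vanishes whenever $X$ is connected.

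This is where \cite{Eti2003} enters. Their description of the rational homology of a finite rack, transported to the quandle complex through the long exact sequence
\[
\cdots\longrightarrow H_2^{D-}(X)\longrightarrow H_2^{R-}(X)\longrightarrow H_2^{Q-}(X)\longrightarrow H_1^{D-}(X)=0
\]
attached to the degenerate subcomplex, shows that for a connected quandle the group $H_2^{Q-}(X)$ is finite. Granting this, the universal coefficient theorem closes the argument. A connected quandle has $H_1^{Q-}(X)\cong\mathds{Z}$, so $\mathrm{Ext}(H_1^{Q-}(X),\mathds{Z})=0$, while $\mathrm{Hom}(H_2^{Q-}(X),\mathds{Z})=0$ because $H_2^{Q-}(X)$ is finite. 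Hence $H_{Q-}^2(X)\cong\mathrm{Ext}(H_1^{Q-}(X),\mathds{Z})\oplus\mathrm{Hom}(H_2^{Q-}(X),\mathds{Z})=0$, so every $\phi\in Z_{Q-}^2(X;\mathds{Z})$ is a coboundary and, by Theorem 2.1, $\Phi_\phi^-(K)$ reduces to $\mathrm{Col}_X(K)$ for every knot $K$.

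The main obstacle is the finiteness of $H_2^{Q-}(X)$ for connected $X$; this is the only genuinely nontrivial ingredient. Extracting it from \cite{Eti2003} means understanding how the rational rack homology in degree two is generated from the orbit data, and verifying that after passing to the quandle quotient and killing the degenerate classes nothing survives rationally in the connected case. Once this vanishing is secured, the homological dictionary of the first step and the universal coefficient computation are entirely routine.
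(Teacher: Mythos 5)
Your proposal follows the same skeleton as the paper's proof: relate quandle, rack and degenerate homology through the exact sequence of the pair, feed in the Etingof--Gra\~{n}a computation of rational rack homology, and close with the universal coefficient theorem. Your opening dictionary (the colored diagram gives a cycle $c_\rho\in C_2^{Q-}(X)$ and the state sum is the Kronecker pairing $\langle[\phi],[c_\rho]\rangle$) is correct and standard, and your UCT endgame, using the easy fact $H_1^{Q-}(X)\cong\mathds{Z}[\mathrm{Orb}(X)]\cong\mathds{Z}$, would even give a slightly cleaner conclusion than the paper's: $H_{Q-}^2(X;\mathds{Z})=0$ outright, which makes both Lemma 3.1 and the Kronecker-pairing discussion redundant (if $H_2^{Q-}(X)$ is finite, either route alone finishes the proof).

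The genuine gap is exactly where you flag ``the main obstacle'': you never prove that $H_2^{Q-}(X)$ is finite for connected $X$, and the exact sequence as you quote it cannot deliver this. The fragment
\[
H_2^{D-}(X)\longrightarrow H_2^{R-}(X)\longrightarrow H_2^{Q-}(X)\longrightarrow H_1^{D-}(X)=0
\]
gives only surjectivity of $H_2^{R-}(X)\to H_2^{Q-}(X)$, hence, with the Etingof--Gra\~{n}a Betti number $\dim H_2^{R-}(X;\mathds{Q})=|\mathrm{Orb}(X)|^2=1$, the bound $\dim H_2^{Q-}(X;\mathds{Q})\leq 1$ --- not the required vanishing. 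To cut down the last dimension you must exhibit a \emph{nonzero} rational class in the image of $H_2^{D-}(X;\mathds{Q})\to H_2^{R-}(X;\mathds{Q})$, and this is precisely what the paper supplies with two ingredients absent from your proposal: (i) the Litherland--Nelson theorem that the long exact sequence splits into short exact sequences, so that $H_2^{D-}(X)\to H_2^{R-}(X)$ is split injective (injectivity survives tensoring with $\mathds{Q}$); and (ii) the direct computation of the degenerate complex, $\partial^-(a,a)=0$, $\partial^-(a,b,b)=0$, $\partial^-(a,a,b)=(a\ast b,a\ast b)-(a,a)$, which yields $H_2^{D-}(X)\cong\mathds{Z}[\mathrm{Orb}(X)]\cong\mathds{Z}$ for connected $X$. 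With (i) and (ii) the rational short exact sequence reads $0\to\mathds{Q}\to\mathds{Q}\to H_2^{Q-}(X;\mathds{Q})\to 0$, forcing $H_2^{Q-}(X;\mathds{Q})=0$; without them your argument does not close, since a priori the degenerate classes could die in $H_2^{R-}(X)$ and leave a free rank-one quotient $H_2^{Q-}(X)$.
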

\begin{proof}
According to the definition of negative quandle cohomology group, we have a long exact sequence
\begin{center}
$\cdots\rightarrow H_n^{D-}(X)\rightarrow H_n^{R-}(X)\rightarrow H_n^{Q-}(X)\rightarrow H_{n-1}^{D-}(X)\rightarrow\cdots$.
\end{center}
It was proved by Litherland and Nelson \cite{Lit2003} that this sequence is split into short exact sequences, i.e. there exists a short exact sequence
\begin{center}
$0\rightarrow H_n^{D-}(X)\rightarrow H_n^{R-}(X)\rightarrow H_n^{Q-}(X)\rightarrow 0$.
\end{center}
On the other hand, in \cite{Eti2003} P. Etingof and M. Gra\~{n}a calculated the Betti numbers of negative rack cohomology groups of $X$ are dim$H_{R-}^n(X; \mathds{Q})=|\text{Orb}(X)|^n$. In particular, when $X$ is connected we have dim$H_{R-}^2(X; \mathds{Q})=1$.

Now we turn to $H_2^{D-}(X)$. Since
\begin{flalign*}
&\partial^-(a, a)=-((a)-(a))+((a)-(a\ast a))=0\\
&\partial^-(a, a, b)=-(a, b)+(a, b)+(a, b)-(a\ast a, b)-(a, a)+(a\ast b, a\ast b)=-(a, a)+(a\ast b, a\ast b)\\
&\partial^-(a, b, b)=-(b, b)+(b, b)+(a, b)-(a\ast b, b)-(a, b)+(a\ast b, b\ast b)=0,
\end{flalign*}
it follows that $H_2^{D-}(X)=\mathds{Z}[\text{Orb}(X)]$, the free abelian group generated by the Orb$(X)$. Recall that $X$ is connected, hence $H_2^{D-}(X)=\mathds{Z}$.

According to the discussion above, together with the universal coefficient theorem \cite{Car2001}, we conclude that $H_{Q-}^2(X; \mathds{Q})=0$. The result follows.
\end{proof}

Now we give the proof of Theorem 1.1.
\begin{proof}
Let $\rho:Q(K)\rightarrow X$ denote a coloring of $K$ by $X$, it is sufficient to show that $\prod\limits_{\tau}W_{\phi}^-(\tau, \rho)=0$. Since $K$ has only one component, it is obvious that $\rho(Q(K))$ belong to the same orbit of $X$. Choose an element $a\in \rho(Q(K))$, since $\rho(Q(K))\subset \text{Orb}(a)$, therefore $\rho$ represents a coloring of $K$ by Orb$(a)$. Let $\phi$ be a negative quandle 2-cocycle of $X$, then it satisfies the negative quandle 2-cocycle condition
\begin{center}
$\phi(x, z)-\phi(x\ast y, z)-\phi(x, y)+\phi(x\ast z, y\ast z)=0$ for $\forall x, y, z\in X$.
\end{center}
Now we define a map $\phi':\text{Orb}(a)\times \text{Orb}(a)\rightarrow \mathds{Z}$ by
\begin{center}
$\phi'(x, y)=\phi(x, y)$ for $\forall x, y\in\text{Orb}(a)$.
\end{center}
Obviously $\phi'\in Z_{Q-}^2(\text{Orb}(a))$. Then we have
\begin{center}
$\prod\limits_{\tau}W_{\phi}^-(\tau, \rho)=\prod\limits_{\tau}W_{\phi'}^-(\tau, \rho)=0$,
\end{center}
the last equality comes from the fact that Orb$(a)$ is connected and Lemma 3.2. The proof is finished.
\end{proof}

\section{The proof of Theorem 1.2}
In this section we give the proof of Theorem 1.2, which is quite different from that of Theorem 1.1.
\begin{lemma}
Let $X$ be a finite quandle, $a$ an element of $X$ and $\phi\in Z_{Q+}^2(X)$, suppose $\rho$ is a coloring of $K$ by $X$, then $\rho\ast a$ is also a coloring of $K$ by $X$. Here $\rho\ast a$ denotes the coloring which assigns $x\ast a$ to an arc if $\rho$ assigns $x$ to it. Moreover we have $\prod\limits_{\tau}W_{\phi}^+(\tau, \rho)+\prod\limits_{\tau}W_{\phi}^+(\tau, \rho\ast a)=0$.
\end{lemma}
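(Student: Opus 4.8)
The first assertion is immediate from the axioms. At any crossing the coloring rule of Figure~1 relates the incoming under-arc $x$, the over-arc $b$ and the outgoing under-arc $x\ast b$. Replacing every color $y$ by $y\ast a$ turns this local picture into $(x\ast a,\ b\ast a,\ (x\ast b)\ast a)$, and the third quandle axiom gives $(x\ast b)\ast a=(x\ast a)\ast(b\ast a)$, which is exactly the coloring rule for $\rho\ast a$. Hence $\rho\ast a$ is again a coloring of $K$ by $X$.

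For the weight identity I would work additively, as in Lemma~3.1: writing $G=\mathds{Z}$ additively, the product $\prod_\tau W_\phi^+(\tau,\rho)$ is recorded by the integer $S(\rho):=\sum_\tau\epsilon(\tau)\phi(x_\tau,b_\tau)$, where $x_\tau$ and $b_\tau$ are the incoming under-arc and over-arc colors at $\tau$, so the asserted equality becomes $S(\rho)+S(\rho\ast a)=0$. From $\delta_+\phi=\phi\circ(d_1+d_2)=0$ together with the vanishing of $\phi$ on degenerate pairs, the positive $2$-cocycle condition reads
$$\phi(x,z)+\phi(x\ast y,z)=\phi(x,y)+\phi(x\ast z,y\ast z)+2\phi(y,z).$$
Taking $y=b_\tau$, $z=a$ rewrites $\phi(x_\tau,b_\tau)+\phi(x_\tau\ast a,b_\tau\ast a)$ as $\phi(x_\tau,a)+\phi(x_\tau\ast b_\tau,a)-2\phi(b_\tau,a)$, so with the abbreviation $\psi(c):=\phi(c,a)$ I obtain
$$S(\rho)+S(\rho\ast a)=\sum_\tau\epsilon(\tau)\bigl(\psi(x_\tau)+\psi(x_\tau\ast b_\tau)-2\psi(b_\tau)\bigr).$$

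Next I would reorganize this sum over the arcs of the diagram. The three colors $x_\tau$, $x_\tau\ast b_\tau$, $b_\tau$ are those of the incoming under-arc, the outgoing under-arc and the over-arc at $\tau$. Each arc $\alpha$ of color $c_\alpha$ is the incoming under-arc at exactly the crossing $\tau_e(\alpha)$ where it ends, the outgoing under-arc at exactly the crossing $\tau_s(\alpha)$ where it begins, and the over-arc at the crossings it runs over. Collecting terms, the coefficient of $\psi(c_\alpha)$ equals $\epsilon(\tau_s(\alpha))+\epsilon(\tau_e(\alpha))-2\sum_{\tau:\,\alpha\text{ over}}\epsilon(\tau)$, so it suffices to prove, for every arc $\alpha$, the purely topological identity
$$\epsilon(\tau_s(\alpha))+\epsilon(\tau_e(\alpha))=2\sum_{\tau:\,\alpha\text{ over}}\epsilon(\tau).$$

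The main obstacle is this last identity, which I would settle through the Alexander numbering of the regions of $\mathds{R}^2-K$. Orienting $K$ and choosing an Alexander numbering $A$ with $A=0$ on the unbounded region and $A$ jumping by $\pm1$ across each strand, the Figure~2 sign is $\epsilon(\tau)=\pm(-1)^{A(\tau)}$ for a distinguished region at $\tau$ determined by the over/under and orientation data (the global $\pm$ is immaterial, since the identity is linear in the signs $\epsilon$). As one travels along $\alpha$ from $\tau_s(\alpha)$ to $\tau_e(\alpha)$, this region changes precisely at the crossings $\alpha$ passes over, its Alexander number moving by $\pm1$ each time, so the signs $\epsilon$ encountered alternate and telescope, while the two endpoint contributions combine to give exactly $2\sum_{\tau:\,\alpha\text{ over}}\epsilon(\tau)$. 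Matching the Figure~2 convention to this parity rule and verifying the endpoint bookkeeping is where the real work lies; once it is done, the coefficient of every $\psi(c_\alpha)$ vanishes, whence $S(\rho)+S(\rho\ast a)=0$ and the proof is complete.
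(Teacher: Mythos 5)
Your proposal is correct and takes essentially the same approach as the paper: the same additive reduction via the positive $2$-cocycle condition with $z=a$, the same redistribution of each crossing's contribution $\epsilon(\tau)\bigl(\phi(x,a)+\phi(x\ast y,a)-2\phi(y,a)\bigr)$ to its three incident arcs, and the same key identity $\epsilon(\tau_s(\alpha))+\epsilon(\tau_e(\alpha))=2\sum_{\tau:\,\alpha\text{ over}}\epsilon(\tau)$, established by the alternation of checkerboard signs along an over-passage. The only difference is in how that alternation is justified --- you invoke Alexander numbering parity, in the spirit of \cite{Kam2014}, while the paper observes it directly from the checkerboard coloring (treating alternating diagrams first) --- and the paper's own treatment of this final step is no more detailed than yours.
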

\begin{proof}
$\rho\ast a$ is a coloring follows directly from the third axiom of quandle.

Now we show that $\prod\limits_{\tau}W_{\phi}^+(\tau, \rho)+\prod\limits_{\tau}W_{\phi}^+(\tau, \rho\ast a)=0$. Because $\phi\in Z_{Q+}^2(X)$, thus $\phi$ satisfies the positive quandle 2-cocycle condition
\begin{center}
$-\phi(y, z)-\phi(y, z)+\phi(x, z)+\phi(x\ast y, z)-\phi(x, y)-\phi(x\ast z, y\ast z)=0$ for $\forall x, y, z\in X$.
\end{center}
It follows that
\begin{center}
$\phi(x, y)+\phi(x\ast z, y\ast z)=-\phi(y, z)-\phi(y, z)+\phi(x, z)+\phi(x\ast y, z)$.
\end{center}
Replacing $z$ with $a$ we have
\begin{center}
$\phi(x, y)+\phi(x\ast a, y\ast a)=-\phi(y, a)-\phi(y, a)+\phi(x, a)+\phi(x\ast y, a)$.
\end{center}
One computes
\begin{flalign*}
&\prod\limits_{\tau}W_{\phi}^+(\tau, \rho)+\prod\limits_{\tau}W_{\phi}^+(\tau, \rho\ast a)\\
=&\prod\limits_{\tau}\phi(x, y)^{\epsilon(\tau)}+\prod\limits_{\tau}\phi(x\ast a, y\ast a)^{\epsilon(\tau)}\\
=&\prod\limits_{\tau}(\phi(x, y)+\phi(x\ast a, y\ast a))^{\epsilon(\tau)}\\
=&\prod\limits_{\tau}(-\phi(y, a)-\phi(y, a)+\phi(x, a)+\phi(x\ast y, a))^{\epsilon(\tau)}
\end{flalign*}
The second equality holds as we are working with coefficient $\mathds{Z}$, for the same reason $\prod\limits_{\tau}(-\phi(y, a)-\phi(y, a)+\phi(x, a)+\phi(x\ast y, a))^{\epsilon(\tau)}$ can be rewritten as $\sum\limits_{\tau}\epsilon(\tau)(-\phi(y, a)-\phi(y, a)+\phi(x, a)+\phi(x\ast y, a))$.

In order to finish the proof we need to show that $\sum\limits_{\tau}\epsilon(\tau)(-\phi(y, a)-\phi(y, a)+\phi(x, a)+\phi(x\ast y, a))=0$. Notice that if the over-arc and under-arcs of $\tau$ are colored by $y$ and $x, x\ast y$ respectively, then the contribution that comes from $\tau$ equals $\epsilon(\tau)(-\phi(y, a)-\phi(y, a)+\phi(x, a)+\phi(x\ast y, a))$. We can divide this contribution into three parts: $-2\epsilon(\tau)\phi(y, a), \epsilon(\tau)\phi(x, a), \epsilon(\tau)\phi(x\ast y, a)$, and assign them to the over-arc and two under-arcs respectively. If the diagram is alternating, then all $\epsilon(\tau)$ are the same. In this case it is not difficult to observe that the contributions from all crossings will cancel out. The proof of the non-alternating case is analogous to the alternating case. In fact it suffices to notice that if an arc crosses several crossings as the over-arc, then the sign $\epsilon$ of these crossings are alternating. It follows that in this general case we still have $\sum\limits_{\tau}\epsilon(\tau)(-\phi(y, a)-\phi(y, a)+\phi(x, a)+\phi(x\ast y, a))=0$.
\end{proof}

\begin{lemma}
$\prod\limits_{\tau}W_{\phi}^+(\tau, \rho)=\prod\limits_{\tau}W_{\phi}^+(\tau, \rho\ast a)$.
\end{lemma}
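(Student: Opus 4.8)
The plan is to exploit Lemma 4.1 together with the fact that right translation by $a$ is a finite-order symmetry of the colorings of $K$. By the third quandle axiom the assignment $S_a\colon x\mapsto x\ast a$ is a quandle endomorphism, and by the second axiom it is a bijection, hence an automorphism of $X$; consequently $\rho\ast a=S_a\circ\rho$, and more generally each $\rho_k:=S_a^{\,k}\circ\rho$ (the coloring obtained by applying $x\mapsto x\ast a$ exactly $k$ times) is again a coloring of $K$, with $\rho_0=\rho$ and $\rho_1=\rho\ast a$. Writing the weight products additively (the invariant is $\mathds{Z}$-valued) and setting $A_k:=\prod\limits_\tau W_\phi^+(\tau,\rho_k)$, I would first apply Lemma 4.1 to the coloring $\rho_k$ and the element $a$ to obtain $A_k+A_{k+1}=0$ for every $k$, so that $A_k=(-1)^kA_0$.

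Next I would close up the chain using the finiteness of $X$: since $S_a$ is a permutation of the finite set $X$ it has some order $m$, so $\rho_m=\rho_0$ is literally the same coloring of the same diagram and $A_m=A_0$. Thus $(-1)^mA_0=A_0$, and when $m$ is odd this yields $2A_0=0$, hence $A_0=0$ because we work over the torsion-free group $\mathds{Z}$; in particular $A_0=A_1$, which is the assertion of the lemma (indeed in the sharper form $\prod\limits_\tau W_\phi^+(\tau,\rho)=0$). This also recovers instantly the easy cases: for $X=T_n$ one has $S_a=\mathrm{id}$, so $\rho\ast a=\rho$ and Lemma 4.1 already forces $2A_0=0$.

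The step I expect to be the main obstacle is the parity of $m$: precisely for those quandles in which every translation has even order — e.g. the dihedral quandles, where $x\mapsto 2a-x$ is an involution — the relation $(-1)^mA_0=A_0$ is vacuous and the iteration does not conclude. To break this I would first remove the torsion classes exactly as in Lemma 3.1 (via Theorem 2.2): if $k\phi$ is a coboundary then each coloring satisfies $k\cdot A_0=0$, whence $A_0=0$, so it suffices to treat a free class $[\phi]\in H^2_{Q+}(X;\mathds{Q})$. Here the cocycle identity behind Lemma 4.1 can be read cohomologically — putting $\psi(\cdot)=\phi(\cdot,a)$ one gets $\phi+S_a^{\ast}\phi=\delta_+\psi$, so $S_a$ acts as $-\mathrm{id}$ on $H^2_{Q+}(X;\mathds{Q})$ — and combined with $(S_a^{\ast})^m=\mathrm{id}$ this kills the free part whenever some translation has odd order. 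The genuinely hard residual case (all translations of even order, free part possibly nonzero) I would reduce to showing that the rational fundamental class $[C_\rho]\in H_2^{Q+}(X;\mathds{Q})$ of an arbitrary colored knot vanishes; this is what Lemma 4.2 is really encoding, and it is where the knottedness of $K$, as opposed to an arbitrary $2$-cycle, must finally be used.
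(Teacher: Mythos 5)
Your reduction steps are correct as far as they go: the identity $\phi+S_a^{\ast}\phi=\delta_+\psi$ with $\psi(x)=\phi(x,a)$ does hold (it is exactly the positive cocycle condition with $z=a$, since $\delta_+\psi(x,y)=\psi(x)-2\psi(y)+\psi(x\ast y)$), Lemma 4.1 applied to $\rho_k$ does give $A_k+A_{k+1}=0$, and when some translation has odd order this forces every $A_k$ to vanish. But the proposal does not prove the lemma: the case in which every translation $S_a$ has even order is exactly the case you leave open, and it is not a marginal case --- it contains all dihedral quandles (where every $S_a$ is an involution, since $(x\ast a)\ast a=x$), i.e.\ precisely the motivating examples this theorem generalizes. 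For such quandles the relation $(-1)^mA_0=A_0$ with $m$ even is vacuous, the cohomological statement $S_a^{\ast}=-\mathrm{id}$ on $H^2_{Q+}(X;\mathds{Q})$ combined with $(S_a^{\ast})^m=\mathrm{id}$ gives no constraint, and your final sentence (``reduce to showing that the rational fundamental class of a colored knot vanishes'') is a restatement of the lemma rather than an argument for it. So what you have is a proof only under the extra hypothesis that some translation has odd order; the general statement remains unproved.

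The missing idea --- which is the entire content of the paper's proof --- is diagrammatic rather than algebraic: the coloring $\rho\ast a$ is obtained from $\rho$ by an actual sequence of Reidemeister moves on the diagram. Concretely, the paper places a small unknot $U$ colored by $a$ next to $K$ and slides $K$ entirely through $U$ using Reidemeister II and III moves, always keeping $U$ over $K$; by the coloring rule, every arc colored $x$ emerges colored $x\ast a$, and since the contribution $\prod\limits_{\tau}W_{\phi}^+(\tau,\cdot)$ of a coloring is unchanged when a diagram is modified by Reidemeister moves (the invariance theorem of \cite{Che2014}), the two products are equal. (The remark following the lemma records an equivalent trick: put a kink on an $a$-colored arc by $\Omega_1$, slide the whole knot through the resulting loop, then undo the kink.) This argument needs no parity hypothesis, no finiteness of the order of $S_a$, and no passage to cohomology; it is exactly the point where the knottedness of $K$ enters --- the input you correctly identified as necessary but did not supply.
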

\begin{proof}
Let $K$ be a knot diagram. If we place a unknot diagram $U$ near $K$ which has no intersection with $K$ then we obtain a two-component link diagram $L=K\cup U$. Obviously $K$ and $L$ have the same positive quandle 2-cocycle invariants if $U$ is always colored by $a$. Let us move $K$ into the circle $U$ via some second and third Reidemeister moves, when $K$ meets $U$ during the move we always put $U$ over $K$. Now we obtain a new link diagram $L'$. By checking the variations of coloring under Reidemeister moves one will find that if an arc of $L$ is colored by $x$ then it will be colored by $x\ast a$ in $L'$. See Figure 3.
\begin{center}
\includegraphics{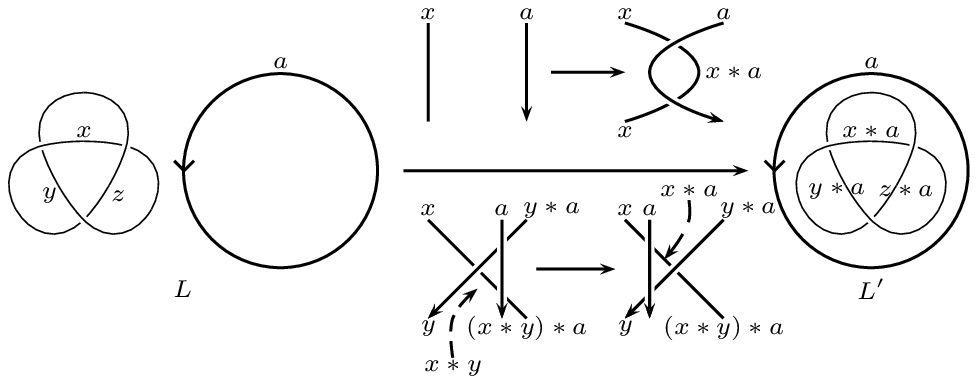}
\centerline{\small Figure 3: Transformation between $L$ and $L'$\quad}
\end{center}
Let $K$ be a knot diagram and $\rho$ a coloring of $K$ by $X$. If $K'$ can be obtained from $K$ by a sequence of Reidemeister moves then these Reidemeister moves will induce a coloring $\rho'$ on $K'$. It was proved in \cite{Che2014} that $\rho$ and $\rho'$ offer the same contribution to the positive quandle 2-cocycle invariants. Together with the discussion above we conclude that $\prod\limits_{\tau}W_{\phi}^+(\tau, \rho)=\prod\limits_{\tau}W_{\phi}^+(\tau, \rho\ast a)$.
\end{proof}

Theorem 1.2 follows directly from Lemma 4.1 and Lemma 4.2.

\textbf{Remark} There is another proof of Lemma 4.2 without needing to introduce the unknot diagram $U$ \cite{Mos2006}. Choose an arc $\lambda$ of $K$, without loss of generality we assume that $\lambda$ is colored by $a$. Let us take the first Reidemeister move $\Omega_1$ on $\lambda$ and slide the remainder of $K$ through the small loop created by the Reidemeister move. After taking $\Omega_1^{-1}$ on $\lambda$ we obtain the coloring $\rho\ast a$ on $K$. On the other hand we want to remark that if we replace the knot $K$ by a link $L$ and replace the coefficient group $\mathds{Z}$ by an abelian group without 2-torsion, Theorem 1.2 still holds.

\section*{Acknowledgement}
The authors would like to thank Takefumi Nosaka and Daniel Moskovich for pointing \cite{Eti2003} and \cite{Mos2006} to us respectively.

\end{document}